\newcommand\Beta{\mathsf{B}}
\newcommand\G{\Gamma}
\newtheorem{theorem}{Theorem}
\newtheorem{proposition}[theorem]{Proposition}
\theoremstyle{definition}
\begin{document}
\vspace*{2cm}

\begin{center}
{\Large Directionally $2$-Signed and Bidirected Graphs}
\\[20pt]

{\large E.\ Sampathkumar and M.\ A.\ Sriraj}
\\[10pt]

{Department of Mathematics, University of Mysore, Mysore, India}
\\[10pt]

{\large Thomas Zaslavsky}
\\[10pt]

{Department of Mathematical Sciences, Binghamton University (SUNY), Binghamton, NY 13902-6000, U.S.A.}
\\[10pt]

{\today}
\\[10pt]

\small{e-mail: esampathkumar@gmail.com, srinivasa\_sriraj@yahoo.co.in, zaslav\@math.binghamton.edu}
\\[20pt]
\end{center}

{\small

\begin{quote}
\emph{Abstract.}
An edge $uv$ in a graph $\G$ is \emph{directionally $2$-signed} (or, \emph{$(2,d)$-signed}) by an ordered pair $(a,b)$, $a,b \in \{+,-\}$, if the label $l(uv) = (a,b)$ from $u$ to $v$, and $l(vu) = (b,a)$ from $v$ to $u$.  Directionally 2-signed graphs are equivalent to bidirected graphs, where each end of an edge has a sign.  A bidirected graph implies a signed graph, where each edge has a sign.  We extend a theorem of Sriraj and Sampathkumar by proving that the signed graph is antibalanced (all even cycles and only even cycles have positive edge sign product) if, and only if, in the bidirected graph, after suitable reorientation of edges every vertex is a source or a sink.
\bigskip

\textbf{Keywords}:  Signed graph, Directionally multisigned graph, $(n,d)$-Sigraph, Bidirected graph, Uniform directional labeling, Antibalance, Skew gains
\\

\textbf{Mathematics Subject Classification 2010}: 05C22

\end{quote}
}
\bigskip

\emph{Signed graphs}, in which the edges of a graph are labelled positive or negative, have developed many applications and a flourishing literature (see \cite{BSG}) since their first introduction by Harary in 1953 \cite{NB}.  Their natural extension to \emph{multisigned graphs}, in which each edge gets an $n$-tuple of signs---that is, the sign group is replaced by a direct product of sign groups---has received slight attention, but the further extension to \emph{gain graphs} (also known as \emph{voltage graphs}), which have edge labels from an arbitrary group such that reversing the edge orientation inverts the label, have been well studied \cite{BSG}.  Note that in a multiple sign group every element is its own inverse, so the question of edge reversal does not arise with multisigned graphs.

Recently, Sampathkumar, Siva Kota Reddy, and Subramanya \cite{Dn, Dn2} introduced a new idea: \emph{directionally multisigned} (or \emph{$(n,d)$-signed}) graphs, assigning to each edge of a graph an $n$-tuple of signs with the rule that reversing the direction of the edge reverses the direction of the $n$-tuple; in other words, if $l(uv) = (a_1,a_2,\ldots,a_n)$ in the direction from $u$ to $v$, then $l(vu) = (a_n,\ldots,a_2,a_1)$ from $v$ to $u$.  Directionally multisigned graphs are not gain graphs, because the effect of edge reorientation on the label is not inversion; but they are a tantalizingly simple example of the also recent idea of \emph{skew gain graphs} \cite{HageSkew, HHSkew}, in which edge reversal corresponds to applying an involutory group antiautomorphism that need not be inversion.  

Signed graphs are the special case $n=1$, where directionality is trivial.  Directionally 2-signed graphs (or \emph{$(2,d)$-signed} graphs) are also special, in a less obvious way.  
A \emph{bidirected graph} $\Beta = (\G,\beta)$ ($\Beta$ is capital Beta) is a graph $\G = (V,E)$ in which each end $(e,u)$ of an edge $e=uv$ has a sign $\beta(e,u) \in \{+,-\}$.  $\G$ is the \emph{underlying graph} and $\beta$ is the \emph{bidirection}.  (The $+$ sign denotes an arrow on the $u$-end of $e$ pointed into the vertex $u$; a $-$ sign denotes an arrow directed out of $u$.  Thus, in a bidirected graph each end of an edge has an independent direction.  Bidirected graphs were defined by Edmonds; cf.\ \cite{EJ}.)

Our graphs may have loops and multiple edges.

\begin{theorem}\label{T:di2bi}
Directionally $2$-signed graphs are equivalent to bidirected graphs.
\end{theorem}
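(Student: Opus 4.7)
The plan is to exhibit mutually inverse maps between the $(2,d)$-signings of $\G$ and the bidirections of $\G$, constructed edge by edge. Since neither structure imposes any global constraint on $\G$, it suffices to establish the equivalence locally on each edge.

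In one direction, given a $(2,d)$-signing $l$, for each edge $e = uv$ with $l(uv) = (a,b)$ I would define the bidirection by $\beta(e,u) := a$ and $\beta(e,v) := b$. To see that this is well defined one checks that reading $l$ in the opposite direction gives the same assignment: the prescription $l(vu) = (b,a)$ yields $\beta(e,v) = b$ and $\beta(e,u) = a$, in agreement with the above. Conversely, given a bidirection $\beta$, set $l(uv) := (\beta(e,u),\beta(e,v))$; then the reversal rule $l(vu) = (\beta(e,v),\beta(e,u))$ is automatic, so $l$ is a legitimate directional $2$-signing. These two constructions are visibly inverse to each other on every non-loop edge.

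For a loop $e$ at $u$, both structures must separately record information for the loop's two ends. In the bidirected picture the two ends $(e,u)_1$ and $(e,u)_2$ carry independent signs $\beta_1, \beta_2$. In the directional picture the two ``orientations'' of $e$ carry the labels $(a,b)$ and $(b,a)$, and we identify each coordinate of the pair with one end of the loop. With that identification the bijection above extends to loops without change.

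The main (and essentially only) obstacle is the bookkeeping for loops: one has to fix, once and for all, a convention that the two ends of a loop are distinguishable half-edges, so that phrases such as ``from $u$ to $v$'' and ``the $u$-end of $e$'' remain meaningful when $u=v$. Once that convention is in place, the theorem reduces to the observation that the two pieces of data---an ordered pair of signs subject to the reversal rule, and a sign attached to each end of the edge---are two literal encodings of the same object, and the proof is a one-line verification on each edge.
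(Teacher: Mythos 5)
Your proposal is correct and follows essentially the same route as the paper's own one-line proof: translate $l(uv)=(a,b)$ into $\beta(e,u)=a$, $\beta(e,v)=b$ and back, checking consistency with the reversal rule. Your extra care with loops (treating the two ends as distinguishable half-edges) is a point the paper leaves implicit but does not change the argument.
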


\begin{proof}
The equivalence for an edge $e=uv$ is simple; if $l(uv) = (a_1,a_2)$, define $\beta(uv,u) := a_1$.  Since $l(vu) = (a_2,a_1)$, one gets $\beta(uv,v) = a_2$.  Conversely, $\beta$ determines $l(uv) = (\beta(uv,u),\beta(uv,v))$.
\end{proof}

To a bidirected graph $\Beta = (\G,\beta)$ there is associated a signed graph $\Sigma_\Beta = (\G,\sigma_\Beta)$ with edge signature $\sigma_\Beta: E \to \{+,-\}$, in which the sign of an edge is the negative product $\sigma_\Beta(uv) = - \beta(uv,u)\beta(uv,v)$.  One considers $\Beta$ to be an orientation of the signed graph $\Sigma_\Beta$, as explained by Zaslavsky in \cite{OSG}.  For instance, when every edge is positive in $\Sigma_\Beta$, then $\Beta$ is an ordinary digraph.  

Sampathkumar and Sriraj \cite{2D} discovered an elegant property of signed-graph orientations, which we strengthen in Theorem \ref{T:a} after necessary definitions and propositions.  
Note that in \cite{2D} they associate to a directionally $2$-signed graph $(\G,l)$ the negative signed graph, $is(\G,l) := -\Sigma_\Beta := (\G, -\sigma_\Beta)$, called the \emph{induced signed graph} of $(\G,l)$.

A signed graph $\Sigma=(\G,\sigma)$ is \emph{balanced} \cite{NB} if in every cycle the product of the edge signs is positive.  
$\Sigma$ is \emph{antibalanced} \cite{A} if in every even (odd) cycle the product of the edge signs is positive (resp., negative); equivalently, the negated signed graph $-\Sigma = (\G,-\sigma)$ is balanced.  For instance, $\Sigma_\Beta$ is antibalanced if and only if $is(\G,l)$ is balanced.  
The following are the fundamental results about balance, the second being a more advanced form of the first.  Note that in a bipartition of a set, $V = V_1 \cup V_2$, the disjoint subsets may be empty.

\begin{proposition}\label{P:bal} 
A signed graph $\Sigma$ is balanced if and only if either of the following equivalent conditions is satisfied: 
\begin{enumerate}[{\rm (i)}]
\item Its vertex set has a bipartition $V = V_1\cup V_2$  such that every positive edge joins vertices in $V_1$ or in $V_2$, and every negative edge joins a vertex in $V_1$ and a vertex in $V_2$  (Harary \cite{NB}).
\item It is possible to label its vertices with $+$ and $-$ such that the sign of any edge in $\Sigma$ is the product of the signs of its end vertices (Sampathkumar \cite{S}).
\end{enumerate}
 \end{proposition}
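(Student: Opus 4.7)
The plan is to split the proposition into three pieces: first show that (i) $\Leftrightarrow$ (ii) via the obvious dictionary between bipartitions and $\{+,-\}$-labelings of vertices, and then prove that balance is equivalent to (i).

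For (i) $\Leftrightarrow$ (ii), I would argue directly: given a bipartition $V = V_1 \cup V_2$, label every vertex of $V_1$ with $+$ and every vertex of $V_2$ with $-$; then positive edges (which stay within a part) have endpoint-sign product $+$, and negative edges (which cross) have product $-$, which is exactly (ii).  Conversely, any $\{+,-\}$-labeling of $V$ determines $V_1$ and $V_2$ as its preimages, and the edge condition in (ii) translates back into the edge condition in (i).  No further work is needed for this equivalence.

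For (i) $\Rightarrow$ balance, I would traverse an arbitrary cycle and observe that, by (i), each negative edge switches which part the current vertex lies in while each positive edge keeps it fixed; since the cycle returns to its starting vertex, the number of negative edges traversed must be even, so the product of edge signs around the cycle is $+$.

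The real content is balance $\Rightarrow$ (i).  I would reduce to the connected case by treating components separately (an isolated component can be placed in either part), fix a basepoint $v_0$, and for each vertex $v$ define $\tau(v) \in \{+,-\}$ to be the product of edge signs along some walk from $v_0$ to $v$, setting $V_1 := \tau^{-1}(+)$ and $V_2 := \tau^{-1}(-)$.  The only nontrivial step---and the place where the balance hypothesis is genuinely used---is checking that $\tau$ is well-defined: two $v_0 v$-walks concatenate (one reversed) to a closed walk at $v_0$, whose edge-sign product decomposes into a product of simple-cycle signs, with any repeated-edge pairs contributing $(\pm 1)^2 = +1$; by balance this total product is $+$, so the two walks yield the same value of $\tau(v)$.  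Once $\tau$ is well-defined, extending a $v_0 u$-walk by an edge $uv$ shows that $\sigma(uv) = \tau(u)\tau(v)$, which gives (ii), and (i) follows immediately by taking $V_i$ as above.  The walk-decomposition bookkeeping is the only real obstacle, and it is standard.
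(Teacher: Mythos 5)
Your argument is correct, but there is nothing in the paper to compare it against: Proposition~\ref{P:bal} is stated as a known fundamental result, attributed to Harary \cite{NB} for part (i) and Sampathkumar \cite{S} for part (ii), and the paper gives no proof of it. Your write-up supplies the standard proof from the literature: the dictionary between bipartitions and $\{+,-\}$-labelings for (i)~$\Leftrightarrow$~(ii), the parity-of-crossings observation for (i)~$\Rightarrow$ balance, and the potential-function construction $\tau$ on each component for the converse, with well-definedness of $\tau$ resting on the fact that in a balanced signed graph every closed walk has positive sign. That last reduction (closed walk to cycles plus doubly-traversed edges) is the only delicate step and you identify it correctly; it is most cleanly done by induction on the length of the closed walk, splitting at a repeated vertex, which also handles the loops and multiple edges the paper explicitly allows. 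So the proposal is a sound, self-contained substitute for the citation, not a deviation from any argument in the paper.
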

 
A vertex $u$ in a bidirected graph is a \emph{source} if $\beta(uv,u) = -$ for every neighbor $v$ and it is a \emph{sink} if $\beta(uv,u) = +$ for every neighbor $v$.  A \emph{uniform vertex} is a source or sink.  We call $\Beta$ \emph{uniformly bidirected} if each vertex is uniform.  Similarly (and originally; see \cite{2D}), a directionally 2-signed graph is called \emph{uniform} if $a_1$, where $l(uv) = (a_1,a_2)$, is independent of $v$ for each $u \in V$.  Clearly, uniformly directionally 2-signed graphs correspond to uniformly bidirected graphs.

\emph{Reorienting an edge} $uv$ means negating $\beta(uv,u)$ and $\beta(uv,v)$.  The corresponding operation on a directionally 2-signed graph is to negate both signs in $l(uv)$.  Note that reorientation of edges in $\Beta$ (equivalently, negation of the directional labeling $l$ on some edges) does not change the associated signed graph $\Sigma_\Beta$.  
We say $\Beta$ is \emph{uniformly bidirected up to reorientation} if it is obtained from a uniform bidirection by reorienting some subset of the edges (possibly, not reorienting any edges).

\begin{theorem}\label{T:a}
A bidirected graph $\Beta$ is uniformly bidirected up to reorientation if and only if $\Sigma_\Beta$ is antibalanced.
\end{theorem}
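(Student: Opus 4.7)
The plan is to reduce the theorem to Proposition~\ref{P:bal}(ii) applied to the negated signed graph $-\Sigma_\Beta$. Observe that antibalance of $\Sigma_\Beta$ is by definition balance of $-\Sigma_\Beta$, which by (ii) is equivalent to the existence of a vertex signature $\mu: V \to \{+,-\}$ such that for every edge $uv$,
$$\beta(uv,u)\,\beta(uv,v) \;=\; -\sigma_\Beta(uv) \;=\; \mu(u)\,\mu(v).$$
Thus the whole theorem reduces to the claim that this end-product identity is precisely the algebraic content of ``uniformly bidirected up to reorientation.'' The crucial supporting remark, already noted in the excerpt, is that reorienting an edge negates $\beta$ at both of its ends simultaneously and therefore leaves the product $\beta(uv,u)\beta(uv,v)$ unchanged.

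For the forward direction, suppose $\Sigma_\Beta$ is antibalanced and choose $\mu$ as above. I would reorient exactly those edges $uv$ for which $\beta(uv,u) \neq \mu(u)$. The identity forces $\beta(uv,v) \neq \mu(v)$ in that case as well, so flipping both ends at once is consistent with the rule for reorientation. After reorientation one has $\beta'(uv,u) = \mu(u)$ at every edge-end at $u$, so $u$ becomes a sink when $\mu(u)=+$ and a source when $\mu(u)=-$, giving a uniform bidirection.

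For the converse, suppose some reorientation $\beta'$ of $\Beta$ is uniform. Define $\mu(u) := +$ if $u$ is a sink in $\beta'$ and $\mu(u) := -$ if $u$ is a source (for an isolated vertex the choice is arbitrary). Then $\beta'(uv,u) = \mu(u)$ at every end, hence $\beta'(uv,u)\,\beta'(uv,v) = \mu(u)\mu(v)$. Because reorientation preserves the end-product, the same equation holds for $\beta$, which is exactly the condition of Proposition~\ref{P:bal}(ii) for $-\Sigma_\Beta$. Therefore $-\Sigma_\Beta$ is balanced and $\Sigma_\Beta$ is antibalanced.

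The only point requiring real thought is recognizing that the quantity $\beta(uv,u)\beta(uv,v)$ serves two roles simultaneously: it is the edge sign of $-\Sigma_\Beta$, and it is the unique reorientation-invariant of the pair of end-signs on $uv$. Everything else is bookkeeping. Loops pose no difficulty: a loop at $u$ is negative in $\Sigma_\Beta$ exactly when its two bidirected ends agree, which is exactly the condition forced by the uniform requirement at $u$ and equally by $\mu(u)^2 = +$ in Proposition~\ref{P:bal}(ii); multiple edges are handled edge by edge.
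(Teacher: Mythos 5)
Your proposal is correct and follows essentially the same route as the paper: both directions reduce to Proposition~\ref{P:bal}(ii) applied to $-\Sigma_\Beta$ via the vertex signature $\mu$, with sinks marked $+$ and sources marked $-$, and the key invariance of the end-product $\beta(uv,u)\beta(uv,v)$ under reorientation. The only cosmetic difference is that the paper defines the uniform bidirection $\beta'(uv,u) := \mu(u)$ outright and then observes $\Sigma_{\Beta'} = \Sigma_\Beta$ forces $\Beta$ to be a reorientation of $\Beta'$, whereas you reorient the disagreeing edges of $\Beta$ directly; these are the same argument.
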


\begin{proof}
\emph{Sufficiency.}
If $\Sigma_\Beta$ is antibalanced, then by Proposition \ref{P:bal}(ii) there exists a vertex signature $\mu$ such that every negative edge in $\Sigma_\Beta$ joins two vertices with the same sign, and every positive edge joins two vertices with opposite signs; i.e., $\sigma_\Beta(uv) = -\mu(u)\mu(v)$.    Define a bidirected graph $\Beta'$ by $\beta'(uv,u) = \mu(u)$.  By its definition $\Beta'$ is uniformly bidirected.  

The signed graph $\Sigma_{\Beta'}$ determined by $\Beta'$ has the same edge signature as $\Sigma_\Beta$; thus $\Sigma_{\Beta'} = \Sigma_\Beta$.  This implies that $\Beta$ is obtained from the uniformly bidirected graph $\Beta'$ by reorienting the edges at which $\beta$ and $\beta'$ differ.

\emph{Necessity.}
If $\Beta$ is uniformly bidirected up to reorientation, first reorient the edges so $\Beta$ is uniformly bidirected.  This does not change $\Sigma_\Beta$.  Mark each sink with sign $+$ and each source with sign $-$.  By the definition of $\Sigma_\Beta$, the sign of each edge is the negative of the product of the signs of its endpoints.  Thus, by Proposition \ref{P:bal}(ii), $-\Sigma_\Beta$ is balanced and $\Sigma_\Beta$ is antibalanced.
\end{proof}

The theorem of Sampathkumar and Sriraj \cite{2D}, stated in terms of bidirected graphs, is that $-\Sigma_\Beta$ is balanced if $\Beta$ is uniform.  Allowing reorientation enables us to improve the theorem, obtaining equivalence.

The correspondence between bidirected and directionally 2-signed graphs suggests a new way to generalize bidirection, namely, to directional multisigns as skew gains.  
If $n$ is even, $n=2m$, an $n$-tuple sign has the form $$(a_1,a_2,\ldots, a_m,b_m,\ldots,b_2,b_1).$$  Each pair $(a_i,b_i)$ for a fixed $i \in \{1,\ldots,m\}$ is a bidirection of $\G$.  Thus, the directionally $n$-signed graph is an $m$-fold bidirected graph.  The $m$ bidirections are unrelated to each other.
If $n=2m+1$ is odd, an $n$-tuple sign has the form $$(a_1,a_2,\ldots,a_m,c,b_m,\ldots,b_2,b_1).$$  Now we have $m$ bidirections $(a_i,b_i)$ and one sign $c$, for each edge.  
We do not yet know how much this interpretation will help in the theory of directionally $n$-signed graphs.


\end{document}